\theoremstyle{plain}
\newtheorem{thm}{Theorem}[section]
\newtheorem{lm}{Lemma}[section]
\newtheorem{st}{Statement}
\newtheorem{prop}{Proposition}
\theoremstyle{definition}
\newtheorem*{defn}{Def}
\newtheorem*{cor}{Corollary}
\theoremstyle{remark}
\newtheorem*{rem}{Rem}
\newcommand{\Tr}{\mathop{\mathrm{Tr}}\nolimits}
\begin{document} 
\selectlanguage{english}
	\begin{center}
		\textbf{Alternative proof of upper bound for spanning trees in a graph}

		\textbf{K. V. CHELPANOV}
	\end{center}

\begin{center}
\begin{footnotesize}
ABSTRACT. We give a proof of upper bound of spanning trees in a graph in terms of vertex degrees using linear algebra techniques and generalize it to the case of multigraphs. Also we obtain for which graphs the inequality turns into equality.
\end{footnotesize}
\end{center}

\section{Introduction}

Let $ G = (V,E) $ denote the undirected connected (multi)graph without loops and $ \tau(G) $ denote number of spanning trees in $ G $. In this paper we want to find upper bound of $ \tau(G) $.

A common approach for counting spanning trees is \textit{Kirchhoff's Matrix Tree Theorem} and its corollary that $ \tau(G) $ can be expressed in terms of Laplacian eigenvalues. There are many various upper bounds for $\tau(G)$ in terms of number of vertices, number of edges and degrees of vertices. 
In \cite{narayanan} the following upper bound for number of spanning trees in a graph was proved by induction using stronger result about multigraphs:
\begin{thm}
	Let $ G $ be a simple graph, $ d_1 \leqslant \dots \leqslant d_n $ --- degrees of its vertices. Then
	$$
		\tau(G) \leqslant \frac{(1+d_1)\dots(1+d_n)}{n^2}.
	$$
\end{thm}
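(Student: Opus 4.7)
The plan is to combine Kirchhoff's Matrix Tree Theorem with Hadamard's inequality for positive semi-definite matrices through a single rank-one correction to the Laplacian. Write $L = D - A$ for the Laplacian, where $D = \mathrm{diag}(d_1,\dots,d_n)$ and $A$ is the adjacency matrix, and let $J$ denote the $n\times n$ all-ones matrix. By Kirchhoff's theorem, if $0 = \lambda_0 < \lambda_1 \leqslant \dots \leqslant \lambda_{n-1}$ are the eigenvalues of $L$, then $n\,\tau(G) = \lambda_1\cdots\lambda_{n-1}$.

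The key step is to pass from $L$ to $L + J$. Because $G$ is connected, $\mathbf{1}$ spans $\ker L$ and is an eigenvector of $J$ with eigenvalue $n$, while $J$ annihilates $\mathbf{1}^{\perp}$, on which $L$ has eigenvalues $\lambda_1,\dots,\lambda_{n-1}$. So $L$ and $J$ commute and share an eigenbasis, the spectrum of $L+J$ is $\{n, \lambda_1, \dots, \lambda_{n-1}\}$, and
$$\det(L+J) \;=\; n\,\lambda_1\cdots\lambda_{n-1} \;=\; n^2\,\tau(G).$$
In particular $L+J$ is positive definite, and its diagonal entries are $(L+J)_{ii} = d_i + 1$.

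Now Hadamard's inequality for positive semi-definite matrices yields
$$n^2\,\tau(G) \;=\; \det(L+J) \;\leqslant\; \prod_{i=1}^{n}(L+J)_{ii} \;=\; \prod_{i=1}^{n}(1+d_i),$$
which is exactly the bound in the theorem. The equality case comes for free: Hadamard's inequality is tight iff $L+J$ is diagonal, i.e.\ $(L+J)_{ij} = 1 - A_{ij} = 0$ for all $i\neq j$, which forces $G = K_n$; this is consistent with $\tau(K_n) = n^{n-2} = n^n/n^2$ and delivers the equality characterization promised in the abstract.

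I do not expect any genuine obstacle: essentially the whole argument is the single observation that adding the rank-one matrix $J$ to $L$ simultaneously removes the spurious zero eigenvalue (so that the determinant of the whole matrix, not a cofactor, encodes $\tau(G)$) and preserves the diagonal at the convenient values $1+d_i$. The extension to multigraphs promised in the abstract should be immediate, since for multigraphs $L = D - A$ is still defined by taking $A_{ij}$ equal to the edge multiplicity, Kirchhoff's theorem still applies, $(L+J)_{ii} = d_i + 1$ is unchanged, and $L+J$ remains positive definite.
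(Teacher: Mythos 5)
Your proof is correct, and it takes a genuinely different route from the paper's. The paper works with the spectral formula $\tau(G)=\mu_2\cdots\mu_n/n$, applies Schur's majorization inequality to the Laplacian of the complement $\overline{G}$ to show that $(n,\mu_n,\dots,\mu_2)$ majorizes $(1+d_n,\dots,1+d_1)$, and then converts this majorization into a product inequality via Karamata's inequality applied to $-\log$. Your argument replaces all of that with the single identity $\det(L+J)=n^2\tau(G)$ followed by Hadamard's inequality for positive definite matrices. The two are really two faces of the same computation: since $L(\overline{G})=nI-J-L$, the matrix you study is $L+J=nI-L(\overline{G})$, its eigenvalues are exactly $(n,\mu_n,\dots,\mu_2)$, its diagonal is $(1+d_1,\dots,1+d_n)$, and Hadamard's inequality for positive semi-definite matrices is precisely Schur majorization plus Karamata applied to that one matrix. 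What your packaging buys is brevity, reliance on a single classical inequality, and an immediate equality analysis ($L+J$ diagonal iff $G=K_n$); what the paper's explicit majorization buys is that the partial-sum inequalities are available for reuse, which it needs later for the partial products appearing in the applications to joins and Cartesian products. One caveat on your closing remark: the ``immediate'' multigraph extension with $(L+J)_{ii}=d_i+1$ yields the bound $\prod_i(1+d_i)/n^2$, which is valid but is not the paper's multigraph theorem $\tau(G)\leqslant\prod_i(\Delta+d_i)/(\Delta n^2)$; to recover that statement, run the same argument on $L+\Delta J$, whose determinant is $\Delta n^2\tau(G)$ and whose diagonal entries are $d_i+\Delta$.
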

 This paper is organised as follows. We give an alternative proof of this inequality using linear algebraic techniques. After that we formulate the analogous result for multigraphs and prove its sharpness for complete multigraphs.

\section{Preliminary lemmas}

\begin{defn}
	Let $ x = (x_1 \geqslant \dots \geqslant x_n) $ and $ y = (y_1 \geqslant \dots \geqslant y_n) $ be two finite seqences of real numbers. We say that $ x $ \textbf{majorizes} $ y $ ($x \succ y$) 		iff for every $ k \in [1..n] $
	$$
		x_1 + \dots + x_k \geqslant y_1 + \dots + y_k
	$$ 
	and $ x_1 + \dots + x_n = y_1 + \dots + y_n $.
\end{defn} 

\begin{lm}[Karamata's inequality, \cite{kar}]
	Let $ f \colon I \to \mathbb{R} $ be a real-valued convex function, $ I $ is an interval on the real line, $ x = (x_1 \geqslant \dots \geqslant x_n) $ and $ y = (y_1 \geqslant \dots 			\geqslant y_n) $ are two finite sequences of numbers in I that $ x \succ y $. Then the following inequality is true:
	$$
		f(x_1) + \dots + f(x_n) \geqslant f(y_1) + \dots + f(y_n).
	$$ 
	If $ f $ is strictly convex, then the equality holds iff the sequences coincide, i.e. $ x_i = y_i $ for every $ i \in [1..n] $. 
\end{lm}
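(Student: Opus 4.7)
The plan is to reduce the inequality to an Abel summation argument, exploiting the support-line inequality for convex functions. First I would, for each index $i$, choose a number $a_i$ lying in the subdifferential of $f$ at $y_i$, so that the support inequality $f(t) \geqslant f(y_i) + a_i(t - y_i)$ holds for every $t \in I$. Since $f$ is convex and the sequence $y_1 \geqslant \dots \geqslant y_n$ is non-increasing, the monotonicity of the one-sided derivative of a convex function allows me to choose the slopes to be non-increasing as well: $a_1 \geqslant a_2 \geqslant \dots \geqslant a_n$.

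Setting $t = x_i$ in the support inequality and rearranging, I obtain $f(x_i) - f(y_i) \geqslant a_i(x_i - y_i)$ for each $i$. Summing over $i$ reduces the problem to showing $\sum_{i=1}^n a_i(x_i - y_i) \geqslant 0$. I would then apply summation by parts: letting $S_k = \sum_{i=1}^k (x_i - y_i)$, the majorization hypothesis gives $S_k \geqslant 0$ for every $k$ and $S_n = 0$. Abel's identity yields
$$\sum_{i=1}^n a_i(x_i - y_i) = a_n S_n + \sum_{k=1}^{n-1}(a_k - a_{k+1})\, S_k,$$
and every term on the right is non-negative because $a_k \geqslant a_{k+1}$ and $S_k \geqslant 0$. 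This establishes the main inequality.

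For the equality statement, if $f$ is strictly convex then the support inequality $f(x_i) - f(y_i) \geqslant a_i(x_i - y_i)$ is \emph{strict} whenever $x_i \neq y_i$. Hence equality in the summed inequality forces $x_i = y_i$ for every $i$, which is the claimed characterization.

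The main obstacle I anticipate is the careful selection of the slopes $a_i$ when $f$ fails to be differentiable at some $y_i$ (for instance, at a corner of a piecewise-linear convex $f$, or at an endpoint of $I$). One must invoke the general fact that a convex function on an interval admits monotone one-sided derivatives and a well-defined subdifferential at every interior point, rather than assuming differentiability outright. A secondary subtlety is the bookkeeping in the equality case when some $S_k$ happens to vanish while the corresponding slope difference $a_k - a_{k+1}$ is positive, but strict convexity resolves this cleanly because it already makes each individual support inequality strict.
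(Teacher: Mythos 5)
Your argument is correct, and it is the standard proof of Karamata's inequality. Note that the paper itself gives no proof of this lemma --- it is quoted as a classical result with a citation to Karamata's 1932 article --- so there is no internal argument to compare against; your write-up would in effect be supplying a proof the paper omits. The chain support inequality $\to$ monotone choice of subgradients $\to$ Abel summation against the nonnegative partial sums $S_k$ is exactly right, and your handling of the equality case via strictness of the support inequality at $t \neq y_i$ is also correct. The one loose end you flag --- that the subdifferential may be empty when $y_i$ is an endpoint of $I$ (e.g.\ $f(t) = -\sqrt{t}$ at the left endpoint of $[0,\infty)$) --- is real but harmless, and you can close it with one observation: if $y_i$ equals the right endpoint $M$ of $I$ for $i \leqslant k$, then the majorization condition $x_1 + \dots + x_k \geqslant y_1 + \dots + y_k = kM$ together with $x_j \leqslant M$ forces $x_j = y_j = M$ for those indices; symmetrically, if $y_i$ equals the left endpoint for $i \geqslant k$, then $\sum_{j \geqslant k} x_j \leqslant \sum_{j \geqslant k} y_j$ (a consequence of $S_{k-1} \geqslant 0$ and $S_n = 0$) forces $x_j = y_j$ there. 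In every such term $x_i = y_i$, so the support inequality is not needed at those points and the remaining indices lie in the interior of $I$, where the subdifferential is nonempty and monotone. With that remark added, the proof is complete.
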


\begin{cor}
	 If $ x = (x_1, \dots, x_n) $, $ y = (y_1, \dots, y_n) $ are two sequences of positive numbers and $ x \succ y $, then $ x_1 \cdot \ldots \cdot x_n \leqslant y_1 \cdot \ldots \cdot y_n $.
\end{cor}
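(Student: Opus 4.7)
The plan is to reduce this multiplicative inequality to the additive form of Karamata's inequality by a logarithmic trick. Since the entries of $x$ and $y$ are strictly positive, the interval $I = (0, +\infty)$ is available as the domain of a convex function that converts products into sums.

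First I would choose $f(t) = -\ln t$, which is strictly convex on $(0, +\infty)$ because its second derivative equals $1/t^2 > 0$. Next, since $x \succ y$ and the two sequences lie in $I$, applying the previous lemma to $f$ directly yields
$$
	-\ln x_1 - \dots - \ln x_n \geqslant -\ln y_1 - \dots - \ln y_n.
$$
Multiplying by $-1$ turns this into $\ln(x_1 \cdots x_n) \leqslant \ln(y_1 \cdots y_n)$, and monotonicity of $\exp$ then produces the desired bound $x_1 \cdots x_n \leqslant y_1 \cdots y_n$.

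I do not expect any real obstacle: the corollary is essentially a direct specialization of Karamata's inequality once the right convex function is chosen. The only thing worth double-checking is the hypothesis of positivity, which is used precisely to guarantee that $\ln$ is defined on all relevant arguments; without it the reduction would collapse. As a minor bonus, strict convexity of $-\ln$ would also give that equality holds iff $x$ and $y$ coincide termwise, should that be needed later.
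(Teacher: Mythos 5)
Your proof is correct and is essentially identical to the paper's own argument: both apply Karamata's inequality to the convex function $f(t) = -\log t$ on $\mathbb{R}_+$ and then negate and exponentiate. The remark about strict convexity yielding the equality case is a small but accurate bonus consistent with the lemma as stated.
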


\begin{proof}
	Function $ f(x) = -\log x $ is defined on $ \mathbb{R}_+ $ and it is convex. Applying Karamata's inequality to this function and given sequences we obtain
	$$
		-\log x_1 - \dots -\log x_n \geqslant -\log y_1 -\dots -\log y_n.
	$$
	Reversing the sign and exponentiating with base $ e $, we obtain the desired inequality.
\end{proof}

\begin{lm}[Schur's inequality\cite{brou}]
	Let $ A $ be the symmetric real-valued $ n \times n $ matrix with diagonal elements $ d_1 \geqslant \dots \geqslant d_n $ and eigenvalues $ \lambda_1 \geqslant \dots \geqslant 			\lambda_n $. Then $ d_1 + \dots + d_k \leqslant \lambda_1 + \dots + \lambda_k $ for every $ k \in [1..n] $. In other words, $ \lambda \succ d $.
\end{lm}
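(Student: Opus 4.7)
The plan is to combine the spectral theorem with an extremal characterisation of partial sums of eigenvalues (the Ky Fan maximum principle). By the spectral theorem I write $A = Q \Lambda Q^T$ with $Q = [q_1 \mid \cdots \mid q_n]$ orthogonal and $\Lambda = \mathrm{diag}(\lambda_1, \ldots, \lambda_n)$. The case $k = n$ follows instantly from $\Tr A = \Tr \Lambda$, so only the strict inequalities for $k < n$ require work.

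The key intermediate claim I would establish is that for every $k \in [1..n]$,
$$
\lambda_1 + \dots + \lambda_k \;=\; \max\bigl\{\Tr(V^T A V) : V \in \mathbb{R}^{n \times k},\; V^T V = I_k\bigr\}.
$$
Once this is available, pick a permutation $\sigma$ of $[1..n]$ such that $A_{\sigma(i)\sigma(i)} = d_i$ for $i \in [1..n]$, and let $V$ be the $n \times k$ matrix whose columns are the standard basis vectors $e_{\sigma(1)}, \ldots, e_{\sigma(k)}$. Then $V^T V = I_k$ and $\Tr(V^T A V) = d_1 + \dots + d_k$, so the displayed identity immediately yields the desired bound.

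To prove the Ky Fan identity I would expand in the eigenbasis. Given admissible $V = [v_1 \mid \cdots \mid v_k]$, set $C := V^T Q$, so that $v_i = \sum_j c_{ij} q_j$ and a direct calculation gives
$$
\Tr(V^T A V) \;=\; \sum_{i=1}^k v_i^T A v_i \;=\; \sum_{j=1}^n s_j \lambda_j, \qquad s_j := \sum_{i=1}^k c_{ij}^2.
$$
Since $CC^T = V^T Q Q^T V = I_k$, the matrix $P := C^T C$ is an orthogonal projection of rank $k$, so $s_j = P_{jj} \in [0,1]$ and $\sum_j s_j = \Tr P = k$. Because $\lambda_1 \geqslant \dots \geqslant \lambda_n$, the linear functional $(s_1, \ldots, s_n) \mapsto \sum_j s_j \lambda_j$ on this polytope is maximised at the vertex $s_1 = \dots = s_k = 1,\ s_{k+1} = \dots = s_n = 0$, and that value is attained by taking $V = [q_1 \mid \cdots \mid q_k]$.

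The main obstacle is the bookkeeping for the constraints $s_j \in [0,1]$ and $\sum_j s_j = k$; the projection identity $P = C^T C$ makes this transparent, after which the optimisation step is one line. Everything else reduces to the spectral theorem and the trace identity.
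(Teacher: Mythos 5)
Your argument is correct and complete. Note, however, that the paper itself offers no proof of this lemma --- it is stated as a known result with a citation to \cite{brou} --- so there is nothing internal to compare against; what you have written is a genuine supplement rather than an alternative to the paper's argument. Your route via the Ky Fan extremal characterisation $\lambda_1+\dots+\lambda_k=\max\{\Tr(V^TAV): V^TV=I_k\}$ is sound: the reduction of the diagonal sum to a feasible $V$ built from standard basis vectors is clean, the identification of $s_j=P_{jj}$ with the diagonal of a rank-$k$ orthogonal projection correctly yields the constraints $s_j\in[0,1]$, $\sum_j s_j=k$, and the linear optimisation over that polytope is elementary (e.g.\ $\sum_j s_j\lambda_j-\sum_{j\leqslant k}\lambda_j=\sum_{j\leqslant k}(s_j-1)\lambda_j+\sum_{j>k}s_j\lambda_j\leqslant\lambda_k(\sum_j s_j-k)=0$). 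For comparison, the reference \cite{brou} reaches the same conclusion more quickly from Cauchy interlacing: the $k\times k$ principal submatrix $B$ on the rows and columns carrying $d_1,\dots,d_k$ has eigenvalues $\eta_1\geqslant\dots\geqslant\eta_k$ with $\eta_i\leqslant\lambda_i$, so $d_1+\dots+d_k=\Tr B=\sum_i\eta_i\leqslant\sum_{i\leqslant k}\lambda_i$. That route outsources the work to interlacing, whereas yours is self-contained modulo the spectral theorem and additionally delivers the Ky Fan identity itself, which is of independent use; either is acceptable here.
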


\section{Basic properties about Laplacian eigenvalues}
\begin{defn}
	Let $ G $ be a simple graph (without multiple edges and loops). \textbf{Laplacian} $ L(G) $ of this graph is the following matrix:
	\begin{equation*}
		L_{i,j} = 
 		\begin{cases}
   			deg(v_i) &\text{if $ i = j $}\\
   			-1 &\text{if $ i \ne j $ and $ v_iv_j \in E(G)$}\\
			0 &\text{if $ i \ne j $ and $ v_iv_j \notin E(G)$}
 		\end{cases}
	\end{equation*}
\end{defn}

\begin{prop}
	Let $ 0 = \mu_1 \leqslant \mu_2 \leqslant \ldots \leqslant \mu_n $ be the spectrum of $ L(G) $. Then for every $ k\in [1..n] $ $ \mu_k \in [0,n] $.
\end{prop}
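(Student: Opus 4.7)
The plan is to establish two bounds separately: $\mu_k \geq 0$ for all $k$ (equivalently, $L(G)$ is positive semidefinite) and $\mu_k \leq n$ for all $k$.

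For the lower bound, I would use the standard quadratic form calculation. Writing $L = D - A$ where $A$ is the adjacency matrix and $D$ is the diagonal degree matrix, a direct computation gives
\[
	x^T L(G) x \;=\; \sum_{v_iv_j \in E(G)} (x_i - x_j)^2 \;\geq\; 0
\]
for every $x \in \mathbb{R}^n$. Hence $L(G)$ is symmetric positive semidefinite, which forces every eigenvalue to satisfy $\mu_k \geq 0$. (In particular one sees $\mu_1 = 0$ by plugging in the all-ones vector $\mathbf{1}$, since $L(G)\mathbf{1} = 0$.)

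For the upper bound, the cleanest route is via the complement graph $\overline{G}$. I would verify the identity
\[
	L(G) + L(\overline{G}) \;=\; nI - J,
\]
where $J$ is the all-ones matrix, by noting that the adjacency matrix of $\overline{G}$ is $J - I - A$ and its degree matrix is $(n-1)I - D$. The matrix $nI - J$ has eigenvalues $0$ (on $\mathbf{1}$) and $n$ (on $\mathbf{1}^\perp$), so $nI - J \preceq nI$. Since $L(\overline{G})$ is positive semidefinite by the previous paragraph applied to $\overline{G}$, we get
\[
	L(G) \;=\; (nI - J) - L(\overline{G}) \;\preceq\; nI - J \;\preceq\; nI,
\]
which yields $\mu_k \leq n$ for every $k$.

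There is no serious obstacle here; the only thing one has to be a little careful about is getting the signs right in the complement identity. An alternative to the complement argument would be Gershgorin's disc theorem, but that gives the weaker bound $2(n-1)$, so the complement trick is really what is needed.
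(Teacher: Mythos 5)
Your proof is correct: the quadratic form $x^TL(G)x=\sum_{v_iv_j\in E}(x_i-x_j)^2$ gives positive semidefiniteness, and the identity $L(G)+L(\overline{G})=nI-J$ together with $L(\overline{G})\succeq 0$ and $nI-J\preceq nI$ gives the upper bound $n$. The paper states this proposition without proof as a standard fact, but your complement argument is exactly the mechanism underlying the paper's next proposition (the spectrum of $L(\overline{G})$ being $0\leqslant n-\mu_n\leqslant\dots\leqslant n-\mu_2$), so your route is fully consistent with the paper's framework and nothing is missing.
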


\begin{prop}
	Let $ 0 = \mu_1 \leqslant \mu_2 \leqslant \ldots \leqslant \mu_n $ be the spectrum of $ L(G) $. Define \textbf{the complement} of G: for every $ v_i, v_j \in V(G) $:
	\begin{align*}
		v_iv_j \in E(G) \iff v_iv_j \notin E(\overline{G});
		\\
		v_iv_j \notin E(G) \iff v_iv_j \in E(\overline{G}).
	\end{align*}
	Then the spectrum of $ L(\overline{G}) $ is $ 0 \leqslant n-\mu_n \leqslant \dots \leqslant n-\mu_2 $.
\end{prop}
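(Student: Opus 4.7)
The plan is to exploit the identity $L(G)+L(\overline{G})=L(K_n)=nI-J$, where $J$ is the all-ones matrix. This identity follows entry by entry: on the diagonal one has $\deg_G(v_i)+\deg_{\overline G}(v_i)=n-1$, which matches the $(i,i)$ entry $n-1$ of $nI-J$; off the diagonal, exactly one of the two Laplacians contributes a $-1$ (according to whether $v_iv_j$ is an edge of $G$ or of $\overline G$), which again matches the $(i,j)$ entry $-1$ of $nI-J$.

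Next I would use the fact that the all-ones vector $\mathbf{1}$ is in the kernel of every graph Laplacian, since the row sums vanish by definition. Because $L(G)$ is symmetric, it admits an orthonormal eigenbasis $v_1,\dots,v_n$ with $L(G)v_i=\mu_i v_i$; I can pick $v_1=\tfrac{1}{\sqrt n}\mathbf{1}$, and then $v_2,\dots,v_n$ are automatically orthogonal to $\mathbf{1}$, so $Jv_i=\mathbf{1}(\mathbf{1}^{\top}v_i)=0$ for $i\geqslant 2$.

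Now the computation is immediate. For $i\geqslant 2$,
\[
L(\overline G)v_i=(nI-J-L(G))v_i=nv_i-0-\mu_i v_i=(n-\mu_i)v_i,
\]
while $L(\overline G)v_1=0$ since $\mathbf{1}$ also lies in the kernel of $L(\overline G)$. Hence the spectrum of $L(\overline G)$, as a multiset, is $\{0,\ n-\mu_2,\ n-\mu_3,\ \dots,\ n-\mu_n\}$. Sorted in nondecreasing order this reads $0\leqslant n-\mu_n\leqslant\dots\leqslant n-\mu_2$, where the leftmost inequality $0\leqslant n-\mu_n$ is guaranteed by the preceding proposition that all Laplacian eigenvalues lie in $[0,n]$.

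There is really no serious obstacle here; the only point worth being careful about is the ordering bookkeeping at the end (the reversal of the indices when one re-sorts $n-\mu_i$ in ascending order) and justifying why the eigenvectors $v_2,\dots,v_n$ may be assumed orthogonal to $\mathbf{1}$, which is exactly the spectral theorem applied to the symmetric matrix $L(G)$.
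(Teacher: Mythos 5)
Your proof is correct and complete: the decomposition $L(G)+L(\overline{G})=nI-J$ together with the choice of an orthonormal eigenbasis containing $\tfrac{1}{\sqrt n}\mathbf{1}$ is exactly the standard argument behind this proposition, which the paper states without proof as a known fact from the spectral graph theory literature. You also correctly handle the two points that are easy to fumble, namely why $Jv_i=0$ for $i\geqslant 2$ and why the re-sorted list starts at $0\leqslant n-\mu_n$ (via the bound $\mu_n\leqslant n$ from the preceding proposition), so nothing needs to be added.
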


\begin{thm}[Kirchhoff's theorem, \cite{kir}]
	Number of spanning trees in graph $ G $ (denote by $ \tau(G) $) is equal to every cofactor of $ L(G) $.
\end{thm}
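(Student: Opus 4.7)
The plan is to prove the Matrix Tree Theorem via the oriented incidence matrix of $G$ together with the Cauchy--Binet formula.

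First, I would orient each edge of $G$ arbitrarily and introduce the $n \times |E(G)|$ oriented incidence matrix $B$ defined by $B_{i,e} = +1$ if $v_i$ is the head of $e$, $B_{i,e} = -1$ if $v_i$ is the tail, and $0$ otherwise. A direct computation verifies that $L(G) = B B^{T}$: the diagonal entry $(i,i)$ counts the edges incident to $v_i$, giving $\deg(v_i)$, while the off-diagonal entry $(i,j)$ equals $-1$ exactly when $v_iv_j \in E(G)$ and $0$ otherwise. Since each column of $B$ contains exactly one $+1$ and one $-1$, the all-ones vector lies in the left kernel of $B$, and hence in the kernel of $L(G)$. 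A short adjugate-matrix calculation then shows that all cofactors of $L(G)$ coincide, so it is enough to evaluate a single one, say $\det\bigl(B'(B')^{T}\bigr)$, where $B'$ is obtained from $B$ by deleting the row indexed by $v_n$.

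The Cauchy--Binet formula now yields
\[
\det\bigl(B' (B')^{T}\bigr) \;=\; \sum_{\substack{S \subseteq E(G) \\ |S| = n-1}} \bigl(\det B'_{S}\bigr)^{2},
\]
where $B'_{S}$ denotes the $(n-1) \times (n-1)$ submatrix of $B'$ whose columns are indexed by $S$. The heart of the proof, which I expect to be the main obstacle, is the following dichotomy: $\det B'_{S} = \pm 1$ when $S$ is the edge set of a spanning tree of $G$, and $\det B'_{S} = 0$ otherwise.

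For the vanishing case, any edge set $S$ of size $n-1$ which is not a spanning tree must contain a cycle (an acyclic subgraph on $n$ vertices with $n-1$ edges is forced to be a spanning tree), and assigning $\pm 1$ to each edge of that cycle according to whether the traversal agrees or disagrees with the chosen orientation produces a nontrivial linear combination of columns of $B'_{S}$ equal to zero. For the unit case, I would proceed by induction on $n$: a spanning tree on $n \geq 2$ vertices has at least two leaves, so some leaf $v$ satisfies $v \ne v_n$; its unique incident edge contributes a column of $B'_{S}$ with a single nonzero entry $\pm 1$ in row $v$, and Laplace expansion along that column reduces $\det B'_{S}$ to the analogous determinant for the spanning tree with $v$ deleted, which is $\pm 1$ by the inductive hypothesis. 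Summing the squared determinants then produces exactly $\tau(G)$, which completes the proof.
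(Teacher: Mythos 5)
The paper does not prove this theorem at all: it is quoted as a classical result with a citation to Kirchhoff's 1847 paper, so there is no in-paper argument to compare yours against. Your proof is the standard incidence-matrix/Cauchy--Binet argument, and its overall architecture is sound: the factorization $L(G)=BB^{T}$, the reduction to a single cofactor via the adjugate (note that the claim that all cofactors coincide uses the one-dimensionality of $\ker L(G)$, i.e.\ connectivity of $G$, which the paper assumes throughout; in the disconnected case all cofactors vanish and the statement is trivial), the Cauchy--Binet expansion over $(n-1)$-subsets of edges, and the dichotomy $\det B'_{S}\in\{0,\pm 1\}$ according to whether $S$ is a spanning tree. The vanishing case via the signed cycle relation among columns is correct as written.

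There is one localized error in the unit case: you expand along the wrong line of the matrix. The column of $B'_{S}$ indexed by the leaf edge $e=vw$ has, in general, \emph{two} nonzero entries ($\pm 1$ in row $v$ and $\mp 1$ in row $w$); it has a single nonzero entry only in the special case where the other endpoint $w$ happens to be the deleted vertex $v_n$. What genuinely has a single nonzero entry is the \emph{row} of $B'_{S}$ indexed by the leaf $v$, precisely because $v$ is incident to exactly one edge of $S$. Laplace expansion along that row produces, up to sign, the determinant of the reduced incidence matrix of the spanning tree $S-v$ of $G-v$ (still with the row of $v_n$ deleted), and the induction closes. With this row-for-column correction the proof is complete and standard.
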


\begin{cor}[\cite{cvet}]
	If $ 0 = \mu_1 \leqslant \mu_2 \leqslant \dots \leqslant \mu_n $ is the spectrum of $ L(G) $, then $ \tau(G) = \frac{\mu_2\cdot\ldots\cdot\mu_n}{n} $.
\end{cor}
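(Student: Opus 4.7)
The plan is to extract the same coefficient of the characteristic polynomial $p(x) = \det(xI - L(G))$ in two different ways: once using the eigenvalue factorization, and once using the classical expansion of characteristic polynomial coefficients as sums of principal minors, at which point Kirchhoff's theorem supplies the link to $\tau(G)$.

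First I would factor $p(x) = \prod_{i=1}^n (x - \mu_i)$. Since $\mu_1 = 0$, this becomes $p(x) = x \cdot \prod_{i=2}^n (x - \mu_i)$, and reading off the coefficient of $x^1$ gives the constant term of the remaining factor, namely $(-1)^{n-1} \mu_2 \cdot \ldots \cdot \mu_n$.

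Second, I would invoke the standard identity that the coefficient of $x^{n-k}$ in $\det(xI - L(G))$ equals $(-1)^k$ times the sum of all $k \times k$ principal minors of $L(G)$. Taking $k = n-1$, the coefficient of $x^1$ equals $(-1)^{n-1} \sum_{i=1}^n M_{ii}$, where $M_{ii}$ denotes the principal minor obtained by deleting row and column $i$. By Kirchhoff's theorem each such principal cofactor (which equals $M_{ii}$ since the diagonal sign $(-1)^{i+i}$ is positive) equals $\tau(G)$, so the sum over $i$ is $n\,\tau(G)$.

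Equating the two expressions for the coefficient of $x^1$ and cancelling the common sign gives $\mu_2 \cdot \ldots \cdot \mu_n = n\,\tau(G)$, which rearranges to the claim. No step is especially hard; the only point requiring care is the identity coefficient-vs-principal-minors for $\det(xI - L)$, which I would either quote or justify briefly by noting that expanding $\det(xI - L)$ multilinearly in the columns of $xI$ versus $-L$ groups terms by the set of columns supplied by $-L$, and the contribution of a subset $S$ of size $k$ is $(-1)^k$ times the principal minor of $L$ indexed by $S$ (because the complementary columns of $xI$ force the row indices to match).
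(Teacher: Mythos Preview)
Your argument is correct and is exactly the standard derivation of this formula from Kirchhoff's theorem. Note, however, that the paper does not actually supply a proof of this corollary: it is simply stated with a citation to \cite{cvet}, so there is no ``paper's own proof'' to compare against. What you wrote is essentially the proof one finds in that reference (characteristic-polynomial coefficient computed two ways, with the principal-minor expansion linking to Kirchhoff's theorem), and every step is sound.
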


Now proceed to give the proof of main theorem.

\section{Proof of the main theorem}
Let us remind the main theorem.
\begin{thm}
	Let $ G $ be a simple graph, $ v(G) = n $, $ d_1 \leqslant \dots \leqslant d_n $ be its degree sequence. Then the following equality holds:
	$$
		\tau(G) \leqslant \frac{(1+d_1)\cdot\ldots\cdot(1+d_n)}{n^2}.
	$$
\end{thm}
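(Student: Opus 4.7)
The plan is to combine Kirchhoff's corollary, which writes $\tau(G) = \mu_2\cdots\mu_n/n$, with Schur's inequality and the product form of Karamata's inequality. Since the target bound involves the quantities $1+d_i$, the natural move is to exhibit a symmetric matrix whose diagonal is exactly $(1+d_1,\dots,1+d_n)$ and whose spectrum consists of $\mu_2,\dots,\mu_n$ together with exactly one extra eigenvalue that cancels cleanly after division by $n^2$.

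The matrix I would use is $M = L(G) + J$, where $J$ is the all-ones $n\times n$ matrix. Its diagonal is $(1+d_1,\dots,1+d_n)$. The all-ones vector $\mathbf{1}$ lies in $\ker L(G)$ and satisfies $J\mathbf{1} = n\mathbf{1}$, so it is an eigenvector of $M$ with eigenvalue $n$. On its orthogonal complement $\mathbf{1}^{\perp}$ the matrix $J$ vanishes identically, so $M$ acts there as $L(G)$ and contributes the remaining eigenvalues $\mu_2,\dots,\mu_n$. Thus $\mathrm{Spec}(M) = \{n,\mu_n,\mu_{n-1},\dots,\mu_2\}$, and by the first proposition of Section~3 we have $\mu_k \leqslant n$ for every $k$, so $n$ is in fact the largest eigenvalue of $M$.

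Now I would apply Schur's inequality to $M$: the eigenvalues in decreasing order, $(n,\mu_n,\mu_{n-1},\dots,\mu_2)$, majorize the diagonal in decreasing order, $(1+d_n,1+d_{n-1},\dots,1+d_1)$; the required sum equality follows from $\Tr M = n + \sum d_i$. Assuming $G$ is connected, both sequences consist of strictly positive reals, so the product corollary of Karamata's inequality gives
\begin{equation*}
    n\cdot \mu_2\cdots\mu_n \;\leqslant\; (1+d_1)\cdots(1+d_n).
\end{equation*}
Dividing by $n^2$ and substituting $\tau(G) = \mu_2\cdots\mu_n/n$ yields the claimed bound. If $G$ is disconnected the inequality holds trivially since then $\tau(G)=0$ while the right-hand side is nonnegative.

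The only genuinely inventive step is guessing the matrix $M = L(G) + J$; everything else is bookkeeping. It is engineered so that (i) its diagonal matches $1+d_i$, and (ii) its spectrum equals $\{\mu_2,\dots,\mu_n\}$ together with the single extra eigenvalue $n$, which after Schur/Karamata supplies precisely the factor that converts the Kirchhoff denominator $n$ into the $n^2$ appearing in the stated bound. Once $M$ is written down, the two lemmas of Section~2 chain together without further work.
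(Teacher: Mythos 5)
Your proof is correct and follows essentially the same route as the paper: Kirchhoff's corollary, Schur's majorization, and the product form of Karamata, all establishing that $(n,\mu_n,\dots,\mu_2)$ majorizes $(1+d_n,\dots,1+d_1)$. The only cosmetic difference is that you apply Schur directly to $M = L(G)+J$, whereas the paper applies it to $L(\overline{G}) = nI - J - L(G) = nI - M$; since majorization is preserved under the affine reversal $x \mapsto n - x$, the two applications yield the identical inequality (and your explicit handling of the disconnected case is a small bonus, as the paper assumes connectivity throughout).
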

\begin{proof}
From corollary of Kirchhoff's theorem we know that if $ 0 =\mu_1 \leqslant \mu_2 \leqslant \dots \leqslant \mu_n $ are eigenvalues of $ L(G) $, then $ \tau(G) = \frac{\mu_2 \cdot \ldots \cdot \mu_n}{n} $. Using this equality, we obtain
$$
	\frac{\mu_2 \cdot \ldots \cdot \mu_n}{n} \leqslant \frac{(1+d_1)\dots(1+d_n)}{n^2}.
$$
Hence, we should prove the following inequality:
$$
	n\cdot\mu_n \cdot \ldots \cdot \mu_2 \leqslant (1+d_n)\cdot \ldots \cdot(1+d_1).
$$ 

Notice that sums of multipliers in both parts are equal. Indeed,
$$
	n + \mu_n + \dots + \mu_2 = n + \Tr(L(G)) = n + d_1 + \dots + d_n = (d_n+1) + \dots + (d_1+1).
$$

Now consider graph $ \overline{G} $ --- the complement of $ G $. Its degree sequence is $ n-1-d_n \leqslant \dots \leqslant n-1-d_1 $ and its spectrum is $ 0 \leqslant n-\mu_n \leqslant \dots \leqslant n-\mu_2 $. Applying Schur's inequality we obtain
$$
	(n-1-d_1) + \dots + (n-1-d_{k-1}) \leqslant (n-\mu_2) + \dots + (n-\mu_k),
$$
which is equivalent to
$$
	\mu_2 + \dots + \mu_k \leqslant d_1 + \dots + d_{k-1} + (k-1) = (1 + d_1) + \dots + (1 + d_{k-1}). 
$$
So,
$$
	n + \mu_n + \dots + \mu_{k+1} \geqslant (1+d_n) + \dots + (1+d_k)
$$
Thus, we get that sequence $ (n, \mu_n, \dots, \mu_2) $ majorizes $ (1+d_n, \dots, 1+d_1) $. Now apply the corollary of Karamata's inequality for products and get the desired inequality.
\end{proof}

\subsection{Generalization for multigraphs} 

As a corollary, we give the variant of the inequality of the main theorem in case of multigraphs. 

\begin{defn}
	Define the \textbf{complement} of multigraph $ G $ with maximal edge multiplicity $ \Delta $ ($ \Delta = \max_{i\ne j} |L_{i,j}(G)| $): for every edge $ e \in E(G) $ 
	$$
		\mu_G(e) = k \iff \mu_{\overline{G}}(e) = \Delta - k,
	$$
	where $ \mu_G(e) $ is multiplicity of $ e $ in $ G $.
\end{defn}

\begin{prop}
	Let $ G $ be a multigraph with maximal edge multiplicity $ \Delta $, $ 0 = \mu_1 \leqslant \mu_2 \leqslant \dots \leqslant \mu_n $ --- its spectrum. Then for every $ k\in [1..n] $ $ \mu_n 		\in [0,n\Delta] $.
\end{prop}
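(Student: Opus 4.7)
The plan is to mirror the argument for Proposition 3.1 (the simple-graph case), replacing the identity $L(G)+L(\overline{G}) = nI - J$ by its natural multigraph analogue and then reading off the eigenvalue bound from positive semidefiniteness. (I will assume the statement should read ``for every $k$, $\mu_k \in [0,n\Delta]$'', since $\mu_n$ does not depend on $k$.)

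First I would compute $L(G)+L(\overline{G})$ directly from the definition of the multigraph complement. For each off-diagonal entry with $i\ne j$, $\mu_G(v_iv_j)+\mu_{\overline G}(v_iv_j)=\Delta$, so $L_{ij}(G)+L_{ij}(\overline G)=-\Delta$. For the diagonal, summing these multiplicities over $j\ne i$ gives $d_i(G)+d_i(\overline G)=(n-1)\Delta$. Hence
\[
L(G)+L(\overline G) \;=\; n\Delta\cdot I \;-\; \Delta\cdot J,
\]
where $J$ is the all-ones matrix. This is the direct generalisation of the simple-graph identity, with $\Delta$ playing the role of $1$.

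Next I would record two standard facts that transfer verbatim from the simple case to multigraphs, via the quadratic form identity
\[
x^{T} L(G)\, x \;=\; \sum_{v_iv_j\in E(G)} \mu_G(v_iv_j)\,(x_i-x_j)^{2}.
\]
This shows simultaneously that $L(G)$ and $L(\overline G)$ are positive semidefinite and that the all-ones vector $\mathbf{1}$ lies in the kernel of both; so $\mu_k\ge 0$ for every $k$, giving the lower bound.

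For the upper bound I would restrict to the invariant subspace $\mathbf{1}^{\perp}$, on which $Jx=0$ and therefore, from the identity of the first step,
\[
x^{T} L(G)\, x \;+\; x^{T} L(\overline G)\, x \;=\; n\Delta\,\|x\|^{2}.
\]
Since $L(\overline G)$ is positive semidefinite, $x^{T}L(G)x\le n\Delta\|x\|^{2}$ for all $x\perp\mathbf{1}$; by the Courant--Fischer characterisation applied on this $(n-1)$-dimensional subspace, this forces $\mu_n\le n\Delta$, and consequently $\mu_k\le\mu_n\le n\Delta$ for all $k$. There is no real obstacle here: the only point that deserves a line of verification is the arithmetic behind the complement identity, after which positive semidefiniteness does all the work.
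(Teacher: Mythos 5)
Your argument is correct: the identity $L(G)+L(\overline G)=n\Delta\,I-\Delta J$, positive semidefiniteness of both Laplacians via the quadratic form, and Courant--Fischer on $\mathbf{1}^{\perp}$ give exactly $\mu_k\in[0,n\Delta]$ (and you are right that the statement should read $\mu_k$ rather than $\mu_n$). The paper states this proposition without proof, and your argument is the standard one --- it is in substance the same computation that underlies the paper's companion proposition that the spectrum of $L(\overline G)$ is $0\leqslant n\Delta-\mu_n\leqslant\dots\leqslant n\Delta-\mu_2$, from which the bound also follows by positive semidefiniteness of $L(\overline G)$.
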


\begin{prop}
	Let $ G $ be a multigraph with maximal edge multiplicity $ \Delta $, $ 0 = \mu_1 \leqslant \mu_2 \leqslant \dots \leqslant \mu_n $ --- its spectrum. Then the spectrum of $ L(\overline{G}) $ 	is $ 0 \leqslant n\Delta - \mu_n \leqslant \dots \leqslant n\Delta - \mu_2 $.
\end{prop}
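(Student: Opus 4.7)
The plan is to relate $L(G)$ and $L(\overline{G})$ through the Laplacian of the complete multigraph $K_n^{\Delta}$ in which every edge has multiplicity $\Delta$. By the defining identity $\mu_G(e)+\mu_{\overline G}(e)=\Delta$, one gets, entry by entry, that
\[
L(G)+L(\overline{G})=L(K_n^\Delta).
\]
A direct calculation shows $L(K_n^\Delta)=\Delta(nI-J)$, where $J$ is the $n\times n$ all-ones matrix: the diagonal is $(n-1)\Delta$ and every off-diagonal entry is $-\Delta$. So the task reduces to reading off the spectrum of $n\Delta I-\Delta J-L(G)$.

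Next I would use the fact that $L(G)$ is real symmetric and that $\mathbf{1}=(1,\dots,1)^{T}$ is an eigenvector of $L(G)$ with eigenvalue $0$ (row sums of any Laplacian vanish). By the spectral theorem we may choose an orthonormal eigenbasis $\mathbf{1}/\sqrt{n},v_2,\dots,v_n$ of $L(G)$ with $L(G)v_i=\mu_i v_i$ and $v_i\perp\mathbf{1}$ for $i\geqslant 2$. Since $J=\mathbf{1}\mathbf{1}^{T}$, we have $J\mathbf{1}=n\mathbf{1}$ and $Jv_i=0$ for $i\geqslant 2$, so the same basis diagonalises $n\Delta I-\Delta J-L(G)$ simultaneously.

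Then I just compute the eigenvalues on each basis vector. On $\mathbf{1}$ we get $n\Delta-\Delta\cdot n-0=0$, matching the mandatory zero eigenvalue of $L(\overline{G})$. On each $v_i$ with $i\geqslant 2$ we get $(n\Delta-0-\mu_i)v_i=(n\Delta-\mu_i)v_i$. Thus the spectrum of $L(\overline{G})$ is $\{0\}\cup\{n\Delta-\mu_i:i=2,\dots,n\}$, and since $\mu_2\leqslant\dots\leqslant\mu_n$ reverses under $x\mapsto n\Delta-x$, these eigenvalues sorted in increasing order are exactly $0\leqslant n\Delta-\mu_n\leqslant\dots\leqslant n\Delta-\mu_2$, as claimed.

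There is no real obstacle here; the only point that deserves a line of justification is the simultaneous diagonalisation, i.e. that nonzero-eigenvalue eigenvectors of $L(G)$ may be taken orthogonal to $\mathbf{1}$, which is immediate from symmetry. Everything else is a direct eigenvalue computation, and the previous proposition (the bound $\mu_n\in[0,n\Delta]$) is a formal consequence, since positive semidefiniteness of $L(\overline{G})$ forces $n\Delta-\mu_n\geqslant 0$.
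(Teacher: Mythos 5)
Your proof is correct, and it is the standard argument: the identity $L(G)+L(\overline{G})=\Delta(nI-J)$ together with simultaneous diagonalisation on $\mathbf{1}^{\perp}$ is exactly the computation the paper relies on (the paper states this proposition without writing out a proof, treating it as the routine multigraph analogue of the simple-graph case). Your closing remark that the preceding proposition ($\mu_n\leqslant n\Delta$) follows from positive semidefiniteness of $L(\overline{G})$ is also a valid and tidy observation.
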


\begin{thm}
	Let $ G $ be a multigraph, $ v(G) = n $, $ \Delta $ --- its maximal edge multiplicity  and $ d_1 \leqslant \dots \leqslant d_n $ --- its degree sequence. Then the following inequality is true:
	$$
		\tau(G) \leqslant \frac{(\Delta+d_1) \dots (\Delta+d_n)}{\Delta n^2}	
	$$
\end{thm}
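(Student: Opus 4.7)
The plan is to copy the proof of the main theorem essentially verbatim, with $\Delta$ playing the role that $1$ plays throughout the simple-graph argument. By Kirchhoff's corollary, $\tau(G) = \mu_2\cdots\mu_n/n$, so it suffices to prove
$$
\Delta n \cdot \mu_2 \cdots \mu_n \leqslant (\Delta+d_1)\cdots(\Delta+d_n).
$$
Both sides are products of $n$ positive factors. The first check is that the sums of factors match:
$$
\Delta n + \mu_2 + \cdots + \mu_n = \Delta n + \Tr L(G) = \Delta n + d_1 + \cdots + d_n = \sum_{i=1}^n (\Delta+d_i).
$$

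The core step is a majorization, obtained via Schur's inequality applied to $L(\overline{G})$. By the two propositions just stated, $L(\overline{G})$ has diagonal entries $(n-1)\Delta - d_i$ and Laplacian spectrum $\{0,\, n\Delta-\mu_n,\, \ldots,\, n\Delta-\mu_2\}$. Schur's inequality (taking the top $k-1$ entries in decreasing order) gives, for each $k$,
$$
\sum_{i=1}^{k-1}\bigl((n-1)\Delta - d_i\bigr) \leqslant \sum_{i=2}^{k}\bigl(n\Delta - \mu_i\bigr),
$$
which rearranges to $\mu_2 + \cdots + \mu_k \leqslant (\Delta+d_1) + \cdots + (\Delta+d_{k-1})$. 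Subtracting from the equal total sums yields
$$
\Delta n + \mu_n + \cdots + \mu_{k+1} \geqslant (\Delta+d_n) + \cdots + (\Delta+d_k),
$$
i.e. the sequence $(\Delta n, \mu_n, \ldots, \mu_2)$ majorizes $(\Delta+d_n, \ldots, \Delta+d_1)$. The bound $\mu_n \leqslant n\Delta$ from the generalized proposition is what guarantees that $\Delta n$ is genuinely the largest entry of the left sequence, so both are in the required decreasing order. Applying the corollary of Karamata's inequality to these positive sequences produces the product inequality above, and dividing by $\Delta n^2$ finishes the proof.

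The main obstacle is purely bookkeeping: one has to verify that every occurrence of ``$n$'' and ``$1$'' in the simple-graph proof gets replaced by ``$n\Delta$'' and ``$\Delta$'' respectively, and that the telescoping sum identity between the Schur bounds and the trace still works out so that the $k=1$ case gives equality (needed for majorization). Apart from this rescaling, no new idea is required; the argument is a mechanical transcription, and the underlying tools, Kirchhoff's theorem, Schur's inequality for the complement, and the Karamata corollary for products, are unchanged.
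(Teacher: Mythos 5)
Your proposal is correct and follows exactly the paper's own argument: Kirchhoff's corollary, the trace identity for the total sums, Schur's inequality applied to $L(\overline{G})$ with the generalized complement spectrum $0 \leqslant n\Delta-\mu_n \leqslant \dots \leqslant n\Delta-\mu_2$, the resulting majorization $(n\Delta, \mu_n, \dots, \mu_2) \succ (\Delta+d_n, \dots, \Delta+d_1)$, and the Karamata corollary for products. The only difference is that you make explicit the bookkeeping checks (ordering via $\mu_n \leqslant n\Delta$, the $k=1$ equality) that the paper leaves implicit.
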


\begin{proof}
	We will apply corollary of Karamata's inequality for sequences $ (n\Delta, \mu_n, \dots, \mu_2) $ and $ (\Delta+d_n, \dots, \Delta+d_1) $ in the similar fashion as in proof of the main 		theorem. Considering the multigraph $ \overline{G} $ and using proposition about its eigenvalues, we obtain the following inequality for every $ k\in[1..n]$:
	$$
		((n-1)\Delta-d_1) + \dots + ((n-1)\Delta-d_{k-1}) \leqslant (n\Delta - \mu_2) + \dots + (n\Delta - \mu_k),
	$$
	which is equivalent to
	$$
		\mu_2 + \dots + \mu_k \leqslant (d_1 + \Delta) + \dots (d_{k-1} + \Delta).
	$$
	So,
	$$
		n\Delta + \mu_n + \dots + \mu_{k+1} \geqslant (d_n + \Delta) + \dots + (d_k + \Delta).
	$$
	We get that $ (n\Delta, \mu_n, \dots, \mu_2) \succ (d_n+\Delta, \dots, d_1+\Delta) $, as we needed. Application of Karamata's inequality for products completes the proof.
\end{proof}

Now consider in what graphs the inequality turns into equality.

\begin{st}
	The inequality turns into equality iff $ G $ is a complete multigraph with all edge multiplicities equal to $ \Delta $.
\end{st}

\begin{proof}
	We have that $ n\Delta \cdot \mu_n \cdot \ldots \cdot \mu_2 = (d_n+\Delta)\dots(d_1+\Delta) $. It is true iff the sequences coincide, i.e. $ n\Delta = d_n + \Delta $, $ \mu_n = d_{n-1}		+ \Delta $, $ \dots $, $ \mu_2 = d_1 + \Delta $. From the first equality we obtain that $ d_n = (n-1)\Delta $, and since every edge of $ G $ has multiplicity no greater than $ \Delta $, we 
	get that every edge incident to the vertex with degree $ d_n $ has multiplicity $ \Delta $. Notice that $ \overline{G} $ has at least two connectivity components, thus its second 			eigenvalue is zero, $ \mu_n = n\Delta $ and $ d_{n-1} = (n-1)\Delta $. So, every edge incident to the vertex with degree $ d_{n-1} $ has multiplicity $ \Delta $. Iterating this process, 		we obtain that all non-zero Laplacian eigenvalues of $ G $ are equal to $ n\Delta $ and every vertex has degree $ (n-1)\Delta $. Thus, every edge has multiplicity $ \Delta $. 
\end{proof}

\newpage
\section{Applications}
\subsection{Product of graphs}

\begin{defn}
	Let $ G_1, \ldots, G_n $ be $ n $ graphs. \textbf{The product of $G_i$'s}, which is denoted by $ G_1\nabla G_2 \nabla \ldots \nabla G_n $, is defined in the following way:
	\begin{align*}
		G = G_1\nabla G_2 \nabla \ldots \nabla G_n, V(G) = \bigsqcup_{i\in[1..n]}V(G_i),
		\\
		E(G) = \bigcup_{i\in[1..n]}E(G_i) \cup \{v_iv_j\}_{1\leqslant i < j \leqslant n, v_i \in V(G_i), v_j \in V(G_j)}
	\end{align*}
\end{defn}

Suppose that $ v(G_i) = v_i $ for every $ i\in [1..n] $, $ V(G) = v $ and $ 0 = \mu^i_1 \leqslant \ldots \leqslant \mu^i_{v_i} $ is spectrum of $ L(G_i) $. Then it is known that
$$
	\tau(G) = v^{n-2}\prod\limits_{i\in[1..n]}\prod\limits_{j\in[2..v_i]}(v - v_i + \mu^i_j)
$$

And since for every $ i\in [1..n] $ every product can be estimated by $ \prod\limits_{j\in[2..v_i]}(v - v_i + \mu^i_j) \leqslant \frac{\prod\limits_{j\in[1..v_i]}(v - v_i + d^i_j)}{(v-v_i)}$. Thus, we have
\begin{thm}
	Let $ G_1 $, $ \ldots $, $ G_n $ be $ n $ graphs, $ v(G_i) = v_i $; $ \sum\limits_{i=1}^{n} v_i = v $, $ G = G_1\nabla\ldots\nabla G_n $. Then the following inequality holds:
	$$
		\tau(G) \leqslant v^{n-2}\frac{\prod\limits_{u\in V(G)}d_G(u)}{\prod\limits_{i\in[1..n]}(v-v_i)}
	$$ 
\end{thm}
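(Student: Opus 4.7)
The plan is to lean on the spectral formula for $\tau(G)$ already stated just before the theorem and to reduce the claim to one inequality per factor $G_i$. Since $v^{n-2}$ is already present on both sides, what needs to be shown is that, for each $i \in [1..n]$,
$$
	\prod_{j=2}^{v_i}(v - v_i + \mu^i_j) \;\leqslant\; \frac{\prod_{j=1}^{v_i}(v - v_i + d^i_j)}{v - v_i},
$$
because multiplying these inequalities over $i$ and recognising that $d_G(u) = d_{G_i}(u) + (v - v_i)$ for $u \in V(G_i)$ immediately rearranges into the claimed bound.

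The main step, then, is to prove this per-$i$ inequality by essentially the same Karamata/Schur mechanism used in the proof of the main theorem. I would apply Schur's inequality to the Laplacian $L(G_i)$, whose diagonal is the degree sequence of $G_i$ and whose eigenvalues are $0 = \mu^i_1 \leqslant \mu^i_2 \leqslant \ldots \leqslant \mu^i_{v_i}$, to obtain
$$
	\mu^i_{v_i} + \mu^i_{v_i - 1} + \dots + \mu^i_{v_i - k + 1} \;\geqslant\; d^i_{v_i} + d^i_{v_i - 1} + \dots + d^i_{v_i - k + 1}
$$
for every $k \in [1..v_i]$, with equality when $k = v_i$ by equality of traces. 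Shifting each term by $v - v_i$ preserves these inequalities and the final equality, which is exactly the statement that
$$
	(v - v_i + \mu^i_{v_i},\, \dots,\, v - v_i + \mu^i_2,\, v - v_i) \;\succ\; (v - v_i + d^i_{v_i},\, \dots,\, v - v_i + d^i_1);
$$
here the appended $v - v_i$ on the left corresponds to the zero eigenvalue $\mu^i_1$, and sits at the bottom of the decreasing order since $\mu^i_j \geqslant 0$. The corollary of Karamata's inequality for products then yields the desired per-$i$ inequality after dividing both sides by $v - v_i > 0$.

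The last paragraph of the proof is bookkeeping: multiply the per-$i$ bounds, substitute into
$
	\tau(G) = v^{n-2}\prod_{i}\prod_{j=2}^{v_i}(v - v_i + \mu^i_j),
$
and use $\prod_i \prod_{j=1}^{v_i}(v - v_i + d^i_j) = \prod_{u \in V(G)} d_G(u)$ to recover the right-hand side of the theorem. The only mild subtlety I anticipate is being careful that $v - v_i > 0$ (which holds automatically as long as $n \geqslant 2$ and each $G_i$ is nonempty, otherwise the factor is vacuous and the join is trivial) so that one may legitimately divide by $v - v_i$; no genuinely hard step is involved beyond the majorization check already carried out.
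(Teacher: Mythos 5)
Your proposal is correct and follows essentially the same route as the paper: reduce via the known spectral formula for $\tau(G_1\nabla\ldots\nabla G_n)$ to the per-factor estimate $\prod_{j=2}^{v_i}(v-v_i+\mu^i_j)\leqslant \frac{\prod_{j=1}^{v_i}(v-v_i+d^i_j)}{v-v_i}$, which the paper asserts without detail. Your justification of that estimate --- applying Schur directly to $L(G_i)$ and appending the shifted zero eigenvalue $v-v_i$ at the bottom of the decreasing order, so that no passage to the complement is needed --- is valid and in fact supplies the missing verification, including the correct caveat that $v-v_i>0$.
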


\begin{rem}
	The equality holds iff $ G_i = \overline{K_{v_i}} $, and hence $ G = K_{v_1,\ldots,v_n} $.
\end{rem}

\subsection{Cartesian product of graphs}

\begin{defn}
	Let $ G $ and $ H $ be two graphs. \textbf{The Cartesian product of $ G $ and $ H $}, which is denoted by $ G \times H $, is defined in the following way:

	if $ V(G) = \{u_1, \ldots, u_m\} $ and $ V(H) = \{v_1, \ldots, v_n\} $. Then $ V(G\times H) = \{w_{ij}\}_{i\in[1..m], j\in[1..n]} $, $ E(G\times H) = \{w_{ij}w_{ik}\}_{i\in[1..m],1\leqslant j<k \leqslant n, v_jv_k\in E(H)} \cup \{w_{ik}w_{jk}\}_{k\in[1..n], 1\leqslant i<j \leqslant m, u_iu_j \in E(G)} $.
\end{defn}

\begin{lm}
	Let $ G $ and $ H $ be two graphs, $ 0 = \lambda_1 \leqslant \ldots \leqslant \lambda_m $ is spectrum of $ L(G) $ and $ 0 = \mu_1 \leqslant \ldots \leqslant \mu_n $ is spectrum of $ L(H) 			$. Then spectrum of $ L(G\times H) $ is $ \{\lambda_i + \mu_j\}_{i\in[1..m], j\in[1..n]} $. 
\end{lm}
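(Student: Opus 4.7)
The plan is to express the Laplacian of $G \times H$ as a Kronecker sum, $L(G \times H) = L(G) \otimes I_n + I_m \otimes L(H)$, and then read off its spectrum from those of $L(G)$ and $L(H)$ via tensor products of eigenvectors.

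The first step is to verify this matrix identity entry by entry. Label the rows and columns of $L(G \times H)$ by pairs $(i,j)$ corresponding to the vertices $w_{ij}$. On the diagonal the vertex $w_{ij}$ has degree $d_G(u_i) + d_H(v_j)$, which is exactly $L(G)_{i,i} + L(H)_{j,j}$. An off-diagonal entry at positions $(i,j) \ne (i',j')$ equals $-1$ precisely when either $i = i'$ and $v_jv_{j'} \in E(H)$, or $j = j'$ and $u_iu_{i'} \in E(G)$; in the remaining case both indices differ and the entry is $0$. This matches $L(G)_{i,i'}\delta_{j,j'} + \delta_{i,i'}L(H)_{j,j'}$, which is exactly the $((i,j),(i',j'))$ entry of the Kronecker sum.

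Once the matrix identity is in hand, pick orthonormal eigenbases $x_1, \ldots, x_m$ of $L(G)$ with eigenvalues $\lambda_i$ and $y_1, \ldots, y_n$ of $L(H)$ with eigenvalues $\mu_j$; these exist because the Laplacians are real symmetric. Using the mixed-product property $(A \otimes B)(x \otimes y) = (Ax) \otimes (By)$, a one-line computation gives
\[
\bigl(L(G) \otimes I_n + I_m \otimes L(H)\bigr)(x_i \otimes y_j) = \lambda_i (x_i \otimes y_j) + \mu_j (x_i \otimes y_j) = (\lambda_i + \mu_j)(x_i \otimes y_j),
\]
so each $x_i \otimes y_j$ is an eigenvector with eigenvalue $\lambda_i + \mu_j$. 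Since the $mn$ tensor products $x_i \otimes y_j$ are pairwise orthonormal in $\mathbb{R}^{mn}$, they form a basis, and therefore the multiset $\{\lambda_i + \mu_j\}_{i \in [1..m], j \in [1..n]}$ is the full spectrum of $L(G \times H)$.

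There is no serious obstacle here beyond careful bookkeeping in the entry check; the only subtlety is to fix a consistent convention for indexing rows and columns of the Kronecker product so that the identification of the vertex set $\{w_{ij}\}$ with the index set $[1..m]\times[1..n]$ agrees on both sides of the identity.
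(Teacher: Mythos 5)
Your proof is correct and complete. The paper actually states this lemma without any proof, citing it implicitly as a standard fact, so there is no argument of the author's to compare against; your route --- identifying $L(G\times H)$ with the Kronecker sum $L(G)\otimes I_n + I_m\otimes L(H)$ by an entry-by-entry check, then diagonalizing with the orthonormal family $x_i\otimes y_j$, which has the right cardinality $mn$ to exhaust the spectrum --- is exactly the standard proof, and you have verified the two points (the matrix identity and the completeness of the eigenvector family) on which it genuinely rests.
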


Using previous notions, we can obtain the following formula for number of spanning trees in $ G\times H $:
$$
	\tau(G\times H) = \frac{\lambda_2\cdot\ldots\cdot \lambda_m}{m}\cdot\frac{\mu_2\cdot\ldots\cdot \mu_n}{n}\cdot\prod\limits_{i\in[2..m], j\in[2..n]}(\lambda_i + \mu_j)
$$
Considering that first and second multipliers in the RHS are equal to $ \tau(G) $ and $ \tau(H) $, we get
$$
	\tau(G\times H) = \tau(G)\cdot\tau(H)\cdot\prod\limits_{i\in[2..m], j\in[2..n]}(\lambda_i + \mu_j)
$$
Fix $ i\in [2..m] $ Then, if $ \lambda_i > 0 $, we can use Karamata nad Schur's inequalities and obtain
$$
	(\lambda_i + \mu_2)\cdot\ldots\cdot(\lambda_i + \mu_n) \leqslant \frac{\prod\limits_{j\in[1..n]}(\lambda_i + d_H(v_j))}{\lambda_i}
$$
If all such $ \lambda_i $ are strictly positive, then $ G $ is connected and, multiplying these inequalities by all $ i $'s in this interval, we obtain the following:
$$
	\tau(G\times H) = \tau(G)\cdot\tau(H)\cdot  \frac{\prod\limits_{i\in[2..m]}\prod\limits_{j\in[1..n]}(\lambda_i + d_H(v_j))}{\lambda_2\cdot\ldots\cdot\lambda_m} = \frac{\tau(H)}{m}\prod\limits_{j\in[1..n]}\prod\limits_{i\in[2..m]}(\lambda_i + d_H(v_j))
$$
Now fix $ j\in [1..n] $. Then, if $ d_H(V_j) \ne 0 $
$$
	\prod\limits_{i\in[2..m]}(\lambda_i + d_H(v_j)) \leqslant \frac{\prod\limits_{k\in[1..m]}(d_G(u_k) + d_H(v_j))}{d_H(v_j)}
$$
If there is no isolated vertices in $ H $, then, multiplying these inequalities by all $ j $'s in this interval, we obtain the following:
$$
	\prod\limits_{j\in[1..n]}\prod\limits_{i\in[2..m]}(\lambda_i + d_H(v_j)) \leqslant \frac{\prod\limits_{i\in[1..m], j\in[1..n]}(d_G(u_i) + d_H(v_j))}{\prod\limits_{j\in[1..n]}d_H(v_j)}
$$
hence,
$$
	\tau(G\times H) \leqslant \tau(H)\cdot\frac{\prod\limits_{i\in[1..m], j\in[1..n]}(d_G(u_i) + d_H(v_j))}{m\cdot\prod\limits_{j\in[1..n]}d_H(v_j)}
$$

Now formulate this result as a theorem.
\begin{thm}
	Let $ G $ and $ H $ be two graphs, $ v(G) = m $, $ v(H) = n $, $ G $ is connected and $ H $ has no isolated vertices. Then the following inequality holds:
	$$
		\tau(G\times H) \leqslant \tau(H)\cdot\frac{\prod\limits_{u\in V(G), v\in V(H)}(d_G(u) + d_H(v))}{m\cdot\prod\limits_{v\in V(H)}d_H(v)}
	$$
\end{thm}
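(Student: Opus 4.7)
The plan is to begin with the spectral identity for $\tau(G\times H)$ obtained from the Cartesian product spectrum lemma together with Kirchhoff's corollary. Since the eigenvalues of $L(G\times H)$ are exactly the sums $\lambda_i+\mu_j$, and $\tau(G)=\lambda_2\cdots\lambda_m/m$, $\tau(H)=\mu_2\cdots\mu_n/n$, one reads off
$$
\tau(G\times H) = \tau(G)\cdot\tau(H)\cdot\prod_{\substack{i\in[2..m]\\ j\in[2..n]}}(\lambda_i+\mu_j),
$$
so the whole problem reduces to bounding the double product. The strategy is then to apply the Karamata--Schur trick from the proof of the main theorem twice, once in each coordinate of the product.

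For the first pass, fix $i\in[2..m]$; connectedness of $G$ gives $\lambda_i>0$. Schur's inequality applied to $L(H)$ yields $(\mu_n,\ldots,\mu_2,0)\succ(d_H(v_n),\ldots,d_H(v_1))$ when the degrees are sorted decreasingly, and shifting every term by $\lambda_i$ preserves majorization while keeping all entries positive. The corollary of Karamata's inequality then gives
$$
\lambda_i\cdot\prod_{j\in[2..n]}(\lambda_i+\mu_j) \leqslant \prod_{j\in[1..n]}(\lambda_i+d_H(v_j)).
$$
Dividing by $\lambda_i$ and multiplying the resulting inequalities over $i\in[2..m]$, the factor $\lambda_2\cdots\lambda_m$ in the denominator combines with $\tau(G)$ to leave $1/m$, and reordering the products yields
$$
\tau(G\times H) \leqslant \frac{\tau(H)}{m}\prod_{j\in[1..n]}\prod_{i\in[2..m]}(\lambda_i+d_H(v_j)).
$$

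For the second pass, I fix $j\in[1..n]$ and repeat the same shifted Karamata--Schur argument with the roles of $G$ and $H$ swapped: Schur on $L(G)$, shifted by $d_H(v_j)$, gives
$$
d_H(v_j)\cdot\prod_{i\in[2..m]}(\lambda_i+d_H(v_j)) \leqslant \prod_{i\in[1..m]}(d_G(u_i)+d_H(v_j)).
$$
Here the hypothesis that $H$ has no isolated vertex is exactly what ensures $d_H(v_j)>0$, so the division is legitimate. Multiplying these inequalities over $j\in[1..n]$ and substituting produces the claimed bound.

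The computation is essentially bookkeeping; the one conceptual point worth flagging is that both passes need positivity of the divisor (the eigenvalue $\lambda_i$, respectively the degree $d_H(v_j)$), and this is precisely what the hypotheses on $G$ and $H$ supply. The only minor obstacle is verifying that the shifted sequences are sorted in matching decreasing orders so that Karamata's corollary applies directly; once that is checked, the rest is automatic.
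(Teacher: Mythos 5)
Your proposal is correct and follows essentially the same route as the paper: the spectral factorization of $\tau(G\times H)$ via the Cartesian product lemma, followed by two passes of the shifted Schur--Karamata argument (first in the $\mu$-coordinate with divisor $\lambda_i>0$ from connectedness of $G$, then in the $\lambda$-coordinate with divisor $d_H(v_j)>0$ from $H$ having no isolated vertices). The only cosmetic difference is that you invoke Schur directly on $L(H)$ and $L(G)$ rather than routing through a complement graph, but that is exactly what the paper does here as well.
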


\begin{center}
\begin{footnotesize}
ACKNOWLEDGEMENTS
\end{footnotesize}
\end{center}

The author is grateful to Fedor Petrov and Alexander Polyanski for suggestions and discussions concerned with this paper and their support. The main results are formulated in author's bachelor thesis.

The work is supported by Ministry of Science and Higher Education of the Russian Federation, agreement №075-15-2022-289.

\newpage

\begin{flushleft}
\begin{footnotesize}
	Leonard Euler Internathional Mathematical Institute in St. Petersburg,

	Line 14th (Vasilyevsky Island), 29B,
	
	St. Petersburg 199178, Russia

	\textit{E-mail:} \tt{chelpkostya@yandex.ru} 
\end{footnotesize}
\end{flushleft}
\end{document}